\let\origsection=\section \def\section{\@ifstar{\origsection*}{\mysection}}
\def\mysection{\@startsection{section}{1}\z@{.7\linespacing\@plus\linespacing}{.5\linespacing}{\normalfont\scshape\centering\S}}
\renewcommand{\PrintDOI}[1]{\doi{#1}}
\numberwithin{equation}{section}
\numberwithin{figure}{section}
\let\polishlcross=\l
\def\l{\ifmmode\ell\else\polishlcross\fi}
\let\emptyset=\varnothing
\let\setminus=\smallsetminus
\def\moverlay{\mathpalette\mov@rlay}
\def\mov@rlay#1#2{\leavevmode\vtop{   \baselineskip\z@skip \lineskiplimit-\maxdimen
		\ialign{\hfil$\m@th#1##$\hfil\cr#2\crcr}}}
\newcommand{\charfusion}[3][\mathord]{
	#1{\ifx#1\mathop\vphantom{#2}\fi
		\mathpalette\mov@rlay{#2\cr#3}
	}
	\ifx#1\mathop\expandafter\displaylimits\fi}
\newcommand{\dcup}{\charfusion[\mathbin]{\cup}{\cdot}}
\DeclareFontFamily{U}  {MnSymbolC}{}
\DeclareSymbolFont{MnSyC}         {U}  {MnSymbolC}{m}{n}
\DeclareFontShape{U}{MnSymbolC}{m}{n}{
	<-6>  MnSymbolC5
	<6-7>  MnSymbolC6
	<7-8>  MnSymbolC7
	<8-9>  MnSymbolC8
	<9-10> MnSymbolC9
	<10-12> MnSymbolC10
	<12->   MnSymbolC12}{}
\DeclareMathSymbol{\powerset}{\mathord}{MnSyC}{180}
\newcommand{\pedge}[9]{
	
	\ifx\relax#6\relax
	\def\qoffs{0pt}
	\else
	\def\qoffs{#6}
	\fi
	
	\def\phedge{
		($#1+#5!\qoffs!-90:#2-#5$) -- 
		($#2+#1!\qoffs!-90:#3-#1$) -- 
		($#3+#2!\qoffs!-90:#4-#2$) -- 
		($#4+#3!\qoffs!-90:#5-#3$) -- 
		($#5+#4!\qoffs!-90:#1-#4$) -- cycle}

	\coordinate (12) at ($#1!\qoffs!90:#2$);
	\coordinate (15) at ($#1!\qoffs!-90:#5$);
	\coordinate (23) at ($#2!\qoffs!90:#3$);
	\coordinate (21) at ($#2!\qoffs!-90:#1$);
	\coordinate (34) at ($#3!\qoffs!90:#4$);
	\coordinate (32) at ($#3!\qoffs!-90:#2$);
	\coordinate (45) at ($#4!\qoffs!90:#5$);
	\coordinate (43) at ($#4!\qoffs!-90:#3$);
	\coordinate (51) at ($#5!\qoffs!90:#1$);
	\coordinate (54) at ($#5!\qoffs!-90:#4$);

	\def\nphedge{
		(15) let \p1=($(15)-#1$), \p2=($(12)-#1$) in 
		arc[start angle={atan2(\y1,\x1)}, delta angle={atan2(\y2,\x2)-atan2(\y1,\x1)-360*(atan2(\y2,\x2)-atan2(\y1,\x1)>0)}, x radius=\qoffs, y radius=\qoffs] --
		(21) let \p1=($(21)-#2$), \p2=($(23)-#2$) in 
		arc[start angle={atan2(\y1,\x1)}, delta angle={atan2(\y2,\x2)-atan2(\y1,\x1)-360*(atan2(\y2,\x2)-atan2(\y1,\x1)>0)}, x radius=\qoffs, y radius=\qoffs] --
		(32) let \p1=($(32)-#3$), \p2=($(34)-#3$) in 
		arc[start angle={atan2(\y1,\x1)}, delta angle={atan2(\y2,\x2)-atan2(\y1,\x1)-360*(atan2(\y2,\x2)-atan2(\y1,\x1)>0)}, x radius=\qoffs, y radius=\qoffs] --
		(43) let \p1=($(43)-#4$), \p2=($(45)-#4$) in 
		arc[start angle={atan2(\y1,\x1)}, delta angle={atan2(\y2,\x2)-atan2(\y1,\x1)-360*(atan2(\y2,\x2)-atan2(\y1,\x1)>0)}, x radius=\qoffs, y radius=\qoffs] --
		(54) let \p1=($(54)-#5$), \p2=($(51)-#5$) in 
		arc[start angle={atan2(\y1,\x1)}, delta angle={atan2(\y2,\x2)-atan2(\y1,\x1)-360*(atan2(\y2,\x2)-atan2(\y1,\x1)>0)}, x radius=\qoffs, y radius=\qoffs] --
		cycle}

	\ifx\relax#7\relax
	\def\plwidth{1pt}
	\else
	\def\plwidth{#7}
	\fi
	
	\ifx\relax#9\relax
	\fill \nphedge;
	\else
	\fill[#9]\nphedge;
	\fi
	
	\ifx\relax#8\relax
	\draw[line width=\plwidth,rounded corners=\qoffs]\nphedge;
	\else
	\draw[line width=\plwidth,#8]\nphedge;
	\fi
}
\newcommand{\qedge}[7]{
	
	\ifx\relax#4\relax
	\def\qoffs{0pt}
	\else
	\def\qoffs{#4}
	\fi
	
	\def\qhedge{
		($#1+#3!\qoffs!-90:#2-#3$) --
		($#2+#1!\qoffs!-90:#3-#1$) --
		($#3+#2!\qoffs!-90:#1-#2$) -- cycle}

	\coordinate (12) at ($#1!\qoffs!90:#2$);
	\coordinate (13) at ($#1!\qoffs!-90:#3$);
	\coordinate (23) at ($#2!\qoffs!90:#3$);
	\coordinate (21) at ($#2!\qoffs!-90:#1$);
	\coordinate (31) at ($#3!\qoffs!90:#1$);
	\coordinate (32) at ($#3!\qoffs!-90:#2$);
	
	\def\nqhedge{
		(13) let \p1=($(13)-#1$), \p2=($(12)-#1$) in
		arc[start angle={atan2(\y1,\x1)}, delta angle={atan2(\y2,\x2)-atan2(\y1,\x1)-360*(atan2(\y2,\x2)-atan2(\y1,\x1)>0)}, x radius=\qoffs, y radius=\qoffs] --
		(21) let \p1=($(21)-#2$), \p2=($(23)-#2$) in
		arc[start angle={atan2(\y1,\x1)}, delta angle={atan2(\y2,\x2)-atan2(\y1,\x1)-360*(atan2(\y2,\x2)-atan2(\y1,\x1)>0)}, x radius=\qoffs, y radius=\qoffs] --
		(32) let \p1=($(32)-#3$), \p2=($(31)-#3$) in
		arc[start angle={atan2(\y1,\x1)}, delta angle={atan2(\y2,\x2)-atan2(\y1,\x1)-360*(atan2(\y2,\x2)-atan2(\y1,\x1)>0)}, x radius=\qoffs, y radius=\qoffs] --
		cycle}
	
	\ifx\relax#5\relax
	\def\qlwidth{1pt}
	\else
	\def\qlwidth{#5}
	\fi
	
	\ifx\relax#7\relax
	\fill \nqhedge;
	\else
	\fill[#7]\nqhedge;
	\fi
	
	\ifx\relax#6\relax
	\draw[line width=\qlwidth,rounded corners=\qoffs]\nqhedge;
	\else
	\draw[line width=\qlwidth,#6]\nqhedge;
	\fi
}
\newcommand{\redge}[8]{
	
	\ifx\relax#5\relax
	\def\qoffs{0pt}
	\else
	\def\qoffs{#5}
	\fi
	
	\def\rhedge{
		($#1+#4!\qoffs!-90:#2-#4$) -- 
		($#2+#1!\qoffs!-90:#3-#1$) -- 
		($#3+#2!\qoffs!-90:#4-#2$) -- 
		($#4+#3!\qoffs!-90:#1-#3$) -- cycle}

	\coordinate (12) at ($#1!\qoffs!90:#2$);
	\coordinate (14) at ($#1!\qoffs!-90:#4$);
	\coordinate (23) at ($#2!\qoffs!90:#3$);
	\coordinate (21) at ($#2!\qoffs!-90:#1$);
	\coordinate (34) at ($#3!\qoffs!90:#4$);
	\coordinate (32) at ($#3!\qoffs!-90:#2$);
	\coordinate (41) at ($#4!\qoffs!90:#1$);
	\coordinate (43) at ($#4!\qoffs!-90:#3$);
	
	\def\nrhedge{
		(14) let \p1=($(14)-#1$), \p2=($(12)-#1$) in 
		arc[start angle={atan2(\y1,\x1)}, delta angle={atan2(\y2,\x2)-atan2(\y1,\x1)-360*(atan2(\y2,\x2)-atan2(\y1,\x1)>0)}, x radius=\qoffs, y radius=\qoffs] --
		(21) let \p1=($(21)-#2$), \p2=($(23)-#2$) in 
		arc[start angle={atan2(\y1,\x1)}, delta angle={atan2(\y2,\x2)-atan2(\y1,\x1)-360*(atan2(\y2,\x2)-atan2(\y1,\x1)>0)}, x radius=\qoffs, y radius=\qoffs] --
		(32) let \p1=($(32)-#3$), \p2=($(34)-#3$) in 
		arc[start angle={atan2(\y1,\x1)}, delta angle={atan2(\y2,\x2)-atan2(\y1,\x1)-360*(atan2(\y2,\x2)-atan2(\y1,\x1)>0)}, x radius=\qoffs, y radius=\qoffs] --
		(43) let \p1=($(43)-#4$), \p2=($(41)-#4$) in 
		arc[start angle={atan2(\y1,\x1)}, delta angle={atan2(\y2,\x2)-atan2(\y1,\x1)-360*(atan2(\y2,\x2)-atan2(\y1,\x1)>0)}, x radius=\qoffs, y radius=\qoffs] --
		cycle}
	
	\ifx\relax#6\relax
	\def\rlwidth{1pt}
	\else
	\def\rlwidth{#6}
	\fi
	
	\ifx\relax#8\relax
	\fill \nrhedge;
	\else
	\fill[#8]\nrhedge;
	\fi
	
	\ifx\relax#7\relax
	\draw[line width=\rlwidth,rounded corners=\qoffs]\nrhedge;
	\else
	\draw[line width=\rlwidth,#7]\nrhedge;
	\fi
}
\let\epsilon=\varepsilon
\let\rho=\varrho
\let\theta=\vartheta
\newtheoremstyle{note}  {4pt}  {4pt}  {\sl}  {}  {\bfseries}  {.}  {.5em}          {}
\newtheoremstyle{introthms}  {3pt}  {3pt}  {\itshape}  {}  {\bfseries}  {.}  {.5em}          {\thmnote{#3}}
\newtheoremstyle{remark}  {2pt}  {2pt}  {\rm}  {}  {\bfseries}  {.}  {.3em}          {}
\theoremstyle{plain}
\newtheorem{theorem}{Theorem}[section]
\newtheorem{lemma}[theorem]{Lemma}
\newtheorem{prop}[theorem]{Proposition}
\newtheorem{constr}{Construction}
\newtheorem{cor}[theorem]{Corollary}
\theoremstyle{note}
\theoremstyle{remark}
\newtheorem{question}[theorem]{Question}
\newtheorem{problem}[theorem]{Problem}
\newcommand*\patchAmsMathEnvironmentForLineno[1]{
	\expandafter\let\csname old#1\expandafter\endcsname\csname #1\endcsname
	\expandafter\let\csname oldend#1\expandafter\endcsname\csname end#1\endcsname
	\renewenvironment{#1}
	{\linenomath\csname old#1\endcsname}
	{\csname oldend#1\endcsname\endlinenomath}}
\newcommand*\patchBothAmsMathEnvironmentsForLineno[1]{
	\patchAmsMathEnvironmentForLineno{#1}
	\patchAmsMathEnvironmentForLineno{#1*}}
\def\ex{\text{\rm ex}}
\newcommand{\overrighharpoonup}[1]{\ThisStyle{%
		\vbox {\m@th\ialign{##\crcr
				\rightharpoonupfill \crcr
				\noalign{\kern-\p@\nointerlineskip}
				$\hfil\SavedStyle#1\hfil$\crcr}}}}
\def\rightharpoonupfill{%
	$\SavedStyle\m@th\mkern+0.8mu\cleaders\hbox{$\shortbar\mkern-4mu$}\hfill\rightharpoonuptip\mkern+0.8mu$}
\def\rightharpoonuptip{%
	\raisebox{\z@}[2pt][1pt]{\scalebox{0.55}{$\SavedStyle\rightharpoonup$}}}
\def\shortbar{%
	\smash{\scalebox{0.55}{$\SavedStyle\relbar$}}}
\newcommand{\overlefharpoonup}[1]{\ThisStyle{%
		\vbox {\m@th\ialign{##\crcr
				\leftharpoonupfill \crcr
				\noalign{\kern-\p@\nointerlineskip}
				$\hfil\SavedStyle#1\hfil$\crcr}}}}
\def\leftharpoonupfill{%
	$\SavedStyle\m@th\mkern+0.8mu\cleaders\hbox{$\shortbar\mkern-4mu$}\hfill\leftharpoonuptip\mkern+0.8mu$}
\def\leftharpoonuptip{%
	\raisebox{\z@}[2pt][1pt]{\scalebox{0.55}{$\SavedStyle\leftharpoonup$}}}
\newsavebox\myboxA
\newsavebox\myboxB
\newlength\mylenA
\newcommand*\xoverline[2][0.75]{%
	\sbox{\myboxA}{$\m@th#2$}%
	\setbox\myboxB\null
	\ht\myboxB=\ht\myboxA%
	\dp\myboxB=\dp\myboxA%
	\wd\myboxB=#1\wd\myboxA
	\sbox\myboxB{$\m@th\overline{\copy\myboxB}$}
	\setlength\mylenA{\the\wd\myboxA}
	\addtolength\mylenA{-\the\wd\myboxB}%
	\ifdim\wd\myboxB<\wd\myboxA%
	\rlap{\hskip 0.5\mylenA\usebox\myboxB}{\usebox\myboxA}%
	\else
	\hskip -0.5\mylenA\rlap{\usebox\myboxA}{\hskip 0.5\mylenA\usebox\myboxB}%
	\fi}
\begin{document}
	
	\title[Separating hypergraph Tur\'{a}n densities]
	{Separating hypergraph Tur\'{a}n densities}

\author[Hong Liu]{Hong Liu}
	\address{Extremal Combinatorics and Probability Group, Institute for Basic Science, Daejeon, South Korea}
    \email{\{hongliu, schuelke\}@ibs.re.kr}

\author[Bjarne Sch\"{u}lke]{Bjarne Sch\"{u}lke}
 
 \author[Shuaichao Wang]{Shuaichao Wang}
	\address{Center for Combinatorics and LPMC, Nankai University, Tianjin, China, and Extremal Combinatorics and Probability Group, Institute for Basic Science, Daejeon, South Korea}
	\email{wsc17746316863@163.com}

\author[Haotian Yang]{Haotian Yang}
\address{Mathematics Department, California Institute of Technology, Pasadena, USA and Extremal Combinatorics and Probability Group, Institute for Basic Science, Daejeon, South Korea}
\email{hyang3@caltech.edu}

 \author[Yixiao Zhang]{Yixiao Zhang}
	\address{Center for Discrete Mathematics, Fuzhou University, Fujian, China, and Extremal Combinatorics and Probability Group, Institute for Basic Science, Daejeon, South Korea}
	\email{fzuzyx@gmail.com}

	\subjclass[2020]{05C65, 05C35, 05C42}
	\keywords{Tur\'{a}n problem, hypergraphs, Tur\'{a}n density}

\begin{abstract}
Determining the Tur\'an densities of hypergraphs is a notoriously difficult problem at the core of combinatorics.
Although Tur\'an posed this problem in 1941,~$\pi(K_{\ell}^{(k)})$ remains unknown for all~$\ell>k\geq 3$.
Prior to this work, it was not even known whether~$\pi(K_{\ell}^{(k)})<\pi(K_{\ell+1}^{(k)})$ holds for general~$\ell$ and~$k$, and the best-known bounds on~$\pi(K_{\ell}^{(k)})$ are far from implying anything close to this.
We prove that~$\pi(K_{\ell}^{(k)})<\pi(K_{\ell+1}^{(k)})$, for all~$\ell>k\geq 3$, and provide a general criterion to distinguish the Tur\'an densities of two hypergraphs.
As a corollary, we obtain that~$\pi(K_{k+1}^{(k)})<\pi(K_{k+2}^{(k)-})$, for all~$k\geq 3$.
For~$k=3$, this was previously proved by Markstr\"om, answering a question by Erd\H{o}s.

\end{abstract}

\maketitle
	
\section{Introduction}\label{SEC:Introduction}

The Tur\'an problem is one of the most important themes in extremal combinatorics.
Roughly speaking, it asks for the threshold of the edge density above which every large hypergraph must contain a copy of a fixed hypergraph~$F$.
A~$k$-uniform hypergraph (or~$k$-graph)~$H=(V,E)$ consists of a vertex set~$V$ and an edge set~$E\subseteq V^{(k)}$.
For~$k$-graphs~$F$ and~$H$, we say that~$H$ is~$F$-free if~$H$ does not contain a copy of~$F$. 
Given~$n\in\mathds{N}$, the Tur\'{a}n number~$\mathrm{ex}(n,F)$ of~$F$ is the maximum number of edges in an~$F$-free~$k$-graph on~$n$ vertices. 
The Tur\'{a}n density of~$F$ is defined as 
\begin{align*}
    \pi(F) = \lim_{n \to \infty}\frac{\mathrm{ex}(n,F)}{\binom{n}{k}}.
\end{align*}
This limit can be shown to exist by a simple monotonicity argument~\cite{KNS64}.
For graphs, i.e.,~$k=2$, the Tur\'{a}n density is completely understood by the results of Tur\'{a}n~\cite{Turan41}, Erd\H{o}s and Stone~\cite{ES46}, and Erd\H{o}s and Simonovits~\cite{ES66}.
The latter result states that for every graph~$F$, we have~$\pi(F) = \frac{\chi(F)-2}{\chi(F)-1}$, where~$\chi(F)$ is the chromatic number of~$F$.

For~$k \geq 3$, determining the Tur\'{a}n density is notoriously difficult in general and very few results are known. 
Let~$K_{\ell}^{(k)}$ denote the complete~$k$-graph on~$\ell$ vertices and let~$K_{\ell}^{(k)-}$ denote~$K_{\ell}^{(k)}$ minus one edge. 
The problem of determining~$\pi(K_{\ell}^{(k)})$ was raised by Tur\'{a}n~\cite{Turan41} back in the 1940s and is still widely open for all~$\ell > k \geq 3$ despite receiving a great deal of attention from various researchers over the years. 
Erd\H{o}s offered {\$500} for determining any~$\pi(K_{\ell}^{(k)})$ with~$\ell > k\geq 3$ and~{\$1000} for determining all~$\pi(K_{\ell}^{(k)})$ with~$\ell >  k\geq 3$.
Beyond basic results like supersaturation, there is no general criterion known that forces two hypergraphs to have the same Tur\'an density or that guarantees their Tur\'an densities to be distinct.
Before this work, it was not even known whether~$\pi(K_{\ell}^{(k)})<\pi(K_{\ell+1}^{(k)})$ holds for general~$k$ and~$\ell$.
In fact, the best known general upper and lower bounds on the Tur\'an densities of cliques are far from implying anything close to this and even vast improvements on these bounds are unlikely to yield such a result.
Here we show that in any uniformity, the Tur\'an densities of cliques of different sizes are separated.

\begin{theorem}\label{COR:gap-complete-or-minus}
    For~$\ell > k \geq 3$, we have~$\pi(K_{\ell}^{(k)}) < \pi(K_{\ell+1}^{(k)})$ and~$\pi(K_{\ell}^{(k)-}) < \pi(K_{\ell+1}^{(k)-})$.
\end{theorem}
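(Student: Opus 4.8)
The plan is to derive both inequalities from a single construction that turns an $F$-free $k$-graph into an $F'$-free one of strictly larger density, whenever the pair $(F,F')$ is ``separable'' in a purely combinatorial sense that $(K_\ell^{(k)},K_{\ell+1}^{(k)})$ and $(K_\ell^{(k)-},K_{\ell+1}^{(k)-})$ both enjoy; this is the promised general criterion. Since $\pi(F')=\lim_{n}\mathrm{ex}(n,F')/\binom nk$, it suffices to exhibit, for all large $n$, an $F'$-free $k$-graph on $n$ vertices whose edge density exceeds $\pi(F)$ by a fixed positive amount.

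Call $I\subseteq V(F')$ \emph{scattered} if no edge of $F'$ contains two vertices of $I$. Fix $\delta\in(0,1)$, set $m=n-\lceil\delta n\rceil$ and $t=n-m$, take an $F$-free $k$-graph $H$ on $[m]$ with $e(H)=\mathrm{ex}(m,F)$, and build $H^{+}$ on $[m]\cup\{v_1,\dots,v_t\}$ by keeping $E(H)$, leaving $\{v_1,\dots,v_t\}$ independent, and adding every $k$-set $\{v_i\}\cup S$ with $S\in[m]^{(k-1)}$. The structural heart of the argument is the claim that \emph{if $F\subseteq F'[V(F')\setminus I]$ for every scattered $I\subseteq V(F')$, then $H^{+}$ is $F'$-free}. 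Indeed, given an embedding of $F'$ into $H^{+}$, let $I$ be the preimage of $\{v_1,\dots,v_t\}$; no edge of $H^{+}$ contains two distinct $v_i$, so $I$ is scattered, and the remaining vertices of $F'$ embed into $H^{+}[[m]]=H$, producing a copy of $F'[V(F')\setminus I]$ and hence of $F$ in $H$, a contradiction. For the two relevant pairs the hypothesis is immediate: since $\ell>k\geq 3$, any two vertices of $K_{\ell+1}^{(k)}$ or of $K_{\ell+1}^{(k)-}$ lie in a common edge, so every scattered set is a singleton or empty; and deleting at most one vertex from $K_{\ell+1}^{(k)}$ leaves $K_{\ell+1}^{(k)}$ or $K_\ell^{(k)}$, while deleting at most one vertex from $K_{\ell+1}^{(k)-}$ leaves $K_{\ell+1}^{(k)-}$, $K_\ell^{(k)}$, or $K_\ell^{(k)-}$, each of which contains the required $K_\ell^{(k)}$ (resp.\ $K_\ell^{(k)-}$).

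It remains to compare densities. We have $e(H^{+})=\mathrm{ex}(m,F)+t\binom{m}{k-1}$, so, letting $n\to\infty$ with $\delta$ fixed and writing $\pi=\pi(F)$,
\begin{align*}
    \frac{e(H^{+})}{\binom nk}\ \longrightarrow\ \pi(1-\delta)^{k}+k\delta(1-\delta)^{k-1}=:\phi(\delta).
\end{align*}
Now $\phi(0)=\pi$ and $\phi'(0)=k(1-\pi)$, which is positive because $F$ has an edge (any $F$-free $k$-graph has a non-edge inside every $|V(F)|$-subset of its vertices, so $\pi(F)\le 1-\binom{|V(F)|}{k}^{-1}<1$); hence $\phi(\delta)>\pi$ for all small $\delta>0$, and optimising over $\delta$ even gives the clean bound $\pi(F')\ge\phi\big(\tfrac{1-\pi}{k-\pi}\big)=\big(\tfrac{k-1}{k-\pi(F)}\big)^{k-1}>\pi(F)$. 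Applying this with $(F,F')=(K_\ell^{(k)},K_{\ell+1}^{(k)})$ and with $(F,F')=(K_\ell^{(k)-},K_{\ell+1}^{(k)-})$ proves the theorem. I do not expect a hard technical obstacle here: in the form above the argument is short, and the real ``obstacle'' is conceptual, namely noticing that one should deliberately \emph{waste} a $\delta$-fraction of the vertices in order to hang $\Theta(n^{k})$ new edges off them — this is what lets the density overtake $\pi(K_\ell^{(k)})$ although the base graph alone cannot, which is presumably why bounds coming from density-optimal constructions never see the gap. The point that must be handled carefully is the exact edge set of the gadget: the new vertices have to be mutually non-adjacent yet each completely joined to $[m]$; making them pairwise adjacent would instantly create arbitrarily large cliques, and joining them only partially would forfeit the density gain. (The same gadget does \emph{not} suffice for the stronger separation $\pi(K_{k+1}^{(k)})<\pi(K_{k+2}^{(k)-})$, since deleting a generic vertex of $K_{k+2}^{(k)-}$ yields $K_{k+1}^{(k)-}\not\supseteq K_{k+1}^{(k)}$; that corollary requires a refinement of the criterion.)
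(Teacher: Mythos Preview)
Your proof is correct and takes a genuinely different route from the paper. The paper derives Theorem~\ref{COR:gap-complete-or-minus} from its general Theorem~\ref{THM:general-result}, Condition~\eqref{EQ:Statement-2}, whose proof starts from an extremal $F'$-free hypergraph, uses an averaging lemma (Lemma~\ref{LEM:Regularity-patition}) to locate $k$ disjoint vertex classes with few crossing edges, deletes a small piece, and then fills in all crossing $k$-tuples; the separation criterion there is phrased in terms of $k$-partitions of $V(F)$ in which every edge meets the first class. Your construction is more elementary: you start instead from an extremal $F$-free hypergraph and attach a small independent set of apex vertices, each joined to every $(k-1)$-set of old vertices, and your criterion --- that $F\subseteq F'-I$ for every scattered $I$ --- is phrased intrinsically in terms of strongly independent sets of $F'$. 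This sidesteps the averaging lemma entirely and yields the clean explicit inequality $\pi(F')\ge\big(\tfrac{k-1}{k-\pi(F)}\big)^{k-1}$, which the paper's argument does not produce. On the other hand, the paper's partition-based condition is genuinely more flexible: as you correctly note, your apex gadget fails for the pair $(K_{k+1}^{(k)},K_{k+2}^{(k)-})$ because deleting a vertex outside the missing edge leaves only $K_{k+1}^{(k)-}$, whereas the paper's Condition~\eqref{EQ:Statement-2} still handles that pair for $k\ge4$. So your approach buys simplicity and an explicit quantitative gap for Theorem~\ref{COR:gap-complete-or-minus}, while the paper's buys the extra reach needed for Theorem~\ref{THM:gapof-Kk+1-Kk+2-}.
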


A closely related question is the following. Dirac and Erd\H{o}s observed that any graph on~$n$ vertices with~$\mathrm{ex}(n,K_{\ell})$+1 edges contains not only a copy of~$K_{\ell}$ but also a copy of~$K_{\ell+1}^{\,-}$. 
Motivated by this, Erd\H{o}s~\cite{Erdos94} posed the following question for~$3$-graphs (see also~\cite{CG98,Chungpage}).

\begin{question}[Erd\H{o}s~\cite{Erdos94}]\label{CONJ:erdos-problem}
    Is it true that~$\ex(n,K_{\ell+1}^{(3)-})=\ex(n,K_{\ell}^{(3)})$ holds for every~$\ell\geq4$?
\end{question}

In this formulation, we can actually argue quickly that the answer is negative (see Proposition~\ref{prop:quick}).
For~$\ell=4$, Markstr\"{o}m~\cite{Mark14} proved that the answer to Question~\ref{CONJ:erdos-problem} is negative in a strong sense by showing that~$\pi(K_4^{(3)})<\pi(K_5^{(3)-})$.
For his proof he constructed a~$K_5^{(3)-}$-free~$3$-graph that provided a lower bound for~$\pi(K_5^{(3)-})$ that is larger than the upper bounds for~$\pi(K_4^{(3)})$ given by computer-aided flag algebra calculations in~\cites{Ra10,RB12}.
In this paper, we generalise Markstr\"om's result to~$k$-graphs, that is, we separate the Tur\'{a}n densities of~$K_{k+1}^{(k)}$ and~$K_{k+2}^{(k)-}$ for all~$k\geq 3$.
We remark that all our proofs are flag-algebra-free.
\begin{theorem}\label{THM:gapof-Kk+1-Kk+2-}
    For every integer~$k \geq 3$, we have~$\pi(K_{k+1}^{(k)}) < \pi(K_{k+2}^{(k)-})$.
\end{theorem}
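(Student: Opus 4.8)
The plan is to deduce Theorem~\ref{THM:gapof-Kk+1-Kk+2-} from Theorem~\ref{COR:gap-complete-or-minus} together with the general separation criterion that (according to the abstract) underlies both results. Concretely, it suffices to find a single $k$-graph $F$ with $K_{k+1}^{(k)}\subseteq F$ but $F\not\supseteq K_{k+2}^{(k)-}$ for which the criterion certifies that $\pi(F)>\pi(K_{k+1}^{(k)})$; then $\pi(K_{k+2}^{(k)-})\ge\pi(F)>\pi(K_{k+1}^{(k)})$, since any $K_{k+2}^{(k)-}$-free host is in particular $F$-free once $F$ contains $K_{k+2}^{(k)-}$ --- wait, that inclusion goes the wrong way, so instead I would run the criterion directly on the pair $(K_{k+1}^{(k)},K_{k+2}^{(k)-})$. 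The point is that the $\$500$/$\$1000$-level difficulty of pinning down either density is irrelevant: the criterion should only require exhibiting, for every large near-extremal $K_{k+1}^{(k)}$-free configuration, a local modification that destroys no $K_{k+2}^{(k)-}$ while gaining a positive density of edges, or equivalently a construction beating the best upper bound --- but being flag-algebra-free, the intended route must be the former, structural one.

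The key steps, in order, would be: (i) Set up the stability/near-extremal framework for $K_{k+1}^{(k)}$-free $k$-graphs: any $k$-graph on $n$ vertices with $(\pi(K_{k+1}^{(k)})-o(1))\binom{n}{k}$ edges and no $K_{k+1}^{(k)}$ is, after deleting $o(n^k)$ edges, ``close'' to a member of some family $\mathcal{F}_n$ of extremal-type configurations (this is where earlier lemmas in the paper should be invoked). (ii) Show that every $H\in\mathcal{F}_n$, or more precisely every $K_{k+1}^{(k)}$-free $H$ that is $o(n^k)$-close to $\mathcal{F}_n$, admits a ``booster'': a set of $\Omega(n^k)$ non-edges that can be added to $H$ without creating any copy of $K_{k+2}^{(k)-}$. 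Since adding a non-edge $e$ creates a $K_{k+2}^{(k)-}$ only if $H[V(e')]\cup\{e\}\supseteq K_{k+2}^{(k)-}$ for some $(k+2)$-set $V(e')\supseteq e$, the condition is that $e$ together with the $\binom{k+2}{k}-1$ existing edges on no $(k+2)$-set completes a clique-minus-an-edge; one exploits the rigid structure from (i) to locate many such safe $e$. (iii) Conclude: the boosted $k$-graph is $K_{k+2}^{(k)-}$-free with $\ge\ex(n,K_{k+1}^{(k)})+\Omega(n^k)$ edges, hence $\pi(K_{k+2}^{(k)-})\ge\pi(K_{k+1}^{(k)})+\Omega(1)$. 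One must also handle the $K_{k+1}^{(k)}$-free hosts that are \emph{not} near-extremal by a separate, easier supersaturation/removal argument giving a density gain for free.

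The main obstacle --- and the heart of the matter --- is step~(ii): identifying, within the poorly-understood near-extremal $K_{k+1}^{(k)}$-free configurations, a robustly large family of non-edges whose insertion is globally $K_{k+2}^{(k)-}$-safe, uniformly over all $k\ge 3$. This is delicate because we do \emph{not} know the extremal configurations explicitly, so the safe non-edges must be described intrinsically (e.g.\ via link structure or via a defect version of the clique-freeness), and the $\Omega(n^k)$ count must survive the $o(n^k)$ slack coming from stability. I would expect the argument to proceed by a local-to-global lemma: if a non-edge $e$ lies in ``few'' partial copies of $K_{k+2}^{(k)-}$ in $H$ --- say, if the link hypergraphs of the vertices of $e$ jointly avoid a certain small configuration --- then $e$ is safe, and one shows combinatorially that near-extremal $K_{k+1}^{(k)}$-free $k$-graphs have a positive fraction of such $e$. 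A clean way to organise this would be a weighted/fractional relaxation so that the gain is additive and one can take $n\to\infty$ cleanly; but the crux remains proving the existence of the booster set without knowing what the extremal $k$-graphs are, which is presumably exactly what the paper's general criterion is engineered to supply.
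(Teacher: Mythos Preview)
Your outline has a genuine gap at step~(i), and it cascades into step~(ii). You propose to invoke a stability/near-extremal description of $K_{k+1}^{(k)}$-free $k$-graphs, but no such description is known: for $k\ge 3$ we do not know $\pi(K_{k+1}^{(k)})$, we do not know an extremal family, and there is no stability theorem to appeal to. So the family $\mathcal F_n$ you need in~(i) is unavailable, and without it your booster search in~(ii) has nothing to anchor to. You correctly sense this (``without knowing what the extremal $k$-graphs are''), but the proposal never supplies the missing idea.

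The paper sidesteps this obstacle entirely, in two different ways for the two regimes. For $k\ge 4$ it applies the general criterion (Theorem~\ref{THM:general-result}) with $F'=K_{k+1}^{(k)}$ and $F=K_{k+2}^{(k)-}$, but the criterion does \emph{not} require stability. Its proof takes \emph{any} extremal $F'$-free $H$, uses a first-moment/averaging argument (Lemma~\ref{LEM:Regularity-patition}) to locate $k$ disjoint sets $U_1,\dots,U_k$ whose crossing edge density is at most $\pi(F')+\varepsilon<1$, and then \emph{globally} replaces: delete all edges inside small random subsets $W_2\subseteq U_2,\dots,W_k\subseteq U_k$ and add \emph{all} transversal $k$-sets through $U_1,W_2,\dots,W_k$. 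The net gain is $\Omega(n^k)$ because the crossing region was sparse. The ``safety'' of every added edge is then certified not by link computations in $H$ but by a purely local condition on the \emph{small} $k$-graph $F$: for every $k$-partition of $V(F)$ in which all edges meet $V_1$, some $V_j$ ($j\ge 2$) can be deleted while still containing $F'$. Verifying this for $(F',F)=(K_{k+1}^{(k)},K_{k+2}^{(k)-})$ is a short case analysis on $|V_1|\in\{1,2,3\}$, and here $k\ge 4$ is used (when $|V_1|=3$). This is the missing idea: the booster is chosen obliviously to the structure of $H$, and the $F$-freeness check is pushed entirely onto a finite combinatorial statement about $F$ and $F'$.

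For $k=3$ the partition condition fails (with $|V_1|=3$ one can place the missing edge inside $V_1$), and the paper instead gives an explicit $K_5^{(3)-}$-free construction of density $\tfrac{31097+277\sqrt{277}}{59248}\approx 0.6027$, which exceeds Chung and Lu's flag-algebra-free upper bound $\pi(K_4^{(3)})\le\tfrac{3+\sqrt{17}}{12}\approx 0.5936$. Neither branch uses stability, removal, or supersaturation in the way you sketched.
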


For more results on hypergraph Tur\'{a}n problems, we refer the reader to the surveys~\cites{BCL22,Furedi91,Ke11,Si95}. 
Given that it seems completely out of reach to determine the Tur\'an densities for all hypergraphs, there has been a considerable effort to describe the distribution of the set of all Tur\'{a}n densities of~$k$-graphs~(see, e.g.,~\cites{Pik14,CS24,LP23}).
In~\cite{CS24}, Conlon and Sch\"ulke show that the set~$\{\pi(F):F\text{ is a }k\text{-graph}\}$ has an accumulation point for every~$k\geq 3$.
As part of their argument they show that the Tur\'an densities of certain hypergraphs are distinct, without having explicit bounds for them.

In fact, both Theorem~\ref{COR:gap-complete-or-minus} and Theorem~\ref{THM:gapof-Kk+1-Kk+2-} are corollaries of a general criterion that we establish to guarantee that the Tur\'an densities of two hypergraphs are distinct.

\begin{theorem}\label{THM:general-result}
    Let~$m > k \geq 3$, let~$F$ be a~$k$-graph on~$m$ vertices, and let~$F'\subseteq F$.
    Suppose that one of the following conditions holds:
    \begin{enumerate}
        \item $\mathrm{ex}(m,F') + \prod_{i=0}^{k-1} \left\lfloor \frac{m+i}{k} \right\rfloor < e(F)$; \label{EQ:Statement-1}
        \item or for every $k$-partition~$V(F) = V_1 \dcup \dots \dcup V_k$ such that every edge of~$F$ intersects~$V_1$, there is some~$2 \leq j \leq k$ such that~$F'$ is contained in~$F-V_j$.\label{EQ:Statement-2}
    \end{enumerate}
    Then~$\pi(F')<\pi(F)$.
\end{theorem}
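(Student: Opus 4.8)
The plan is to establish the stronger quantitative statement that $\pi(F)\ge\pi(F')+\delta$ for some constant $\delta=\delta(F,F')>0$; as $F'\subseteq F$ already gives $\pi(F')\le\pi(F)$, this suffices. Concretely, for every large $n$ we will exhibit an $F$-free $k$-graph on $n$ vertices with at least $(\pi(F')+\delta)\binom nk$ edges, which by the definition of the Tur\'an density forces $\pi(F)\ge\pi(F')+\delta>\pi(F')$. Throughout we use that $\pi(F')<1$ (true for any $k$-graph with an edge, since $F'\subseteq K^{(k)}_{|V(F')|}$ and $\pi(K^{(k)}_{|V(F')|})<1$), so that any positive amount of density added on top of a near-extremal $F'$-free hypergraph is a genuine gain. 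The hypergraphs will be built by gluing a near-extremal $F'$-free hypergraph to a ``transversal'' (complete $k$-partite) part, and conditions~\ref{EQ:Statement-1} and~\ref{EQ:Statement-2} are exactly what is needed to guarantee that this gluing creates no copy of $F$.

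\emph{Case of condition~\ref{EQ:Statement-1}.} Fix a large $n$, let $J$ be an $F'$-free $k$-graph on $[n]$ with $\mathrm{ex}(n,F')$ edges, let $[n]=P_1\dcup\dots\dcup P_k$ be a balanced $k$-partition, and let $T$ be the set of all transversal edges (one vertex in each $P_i$); put $H:=J\cup T$. First, $H$ is $F$-free: if a set $W$ with $|W|=m$ spanned a copy of $F$, then the edges of $H$ inside $W$ would split into those lying in $J$ — at most $\mathrm{ex}(m,F')$ of them, since $J[W]$ is $F'$-free — and those lying in $T$, which are transversal and hence number at most $|W\cap P_1|\cdots|W\cap P_k|$; since these are $k$ non-negative integers summing to $m$, that product is at most $\prod_{i=0}^{k-1}\lfloor\frac{m+i}{k}\rfloor$, so $e(F)\le\mathrm{ex}(m,F')+\prod_{i=0}^{k-1}\lfloor\frac{m+i}{k}\rfloor$, contradicting the hypothesis. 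Second, an averaging argument over all balanced $k$-partitions lets us choose $P_1,\dots,P_k$ so that at most a $p$-fraction of the edges of $J$ are transversal, where $p=p(n)\to k!/k^k$; then $e(H)\ge(1-p)\,\mathrm{ex}(n,F')+|T|$, and since both $p$ and $|T|/\binom nk$ tend to $k!/k^k$, dividing by $\binom nk$ and letting $n\to\infty$ gives $e(H)/\binom nk\to\pi(F')+(1-\pi(F'))\,k!/k^k$, which exceeds $\pi(F')$ by a fixed amount. This yields the claim with $\delta=\tfrac12(1-\pi(F'))k!/k^k$.

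\emph{Case of condition~\ref{EQ:Statement-2}.} Now the edge count of the $F'$-free part no longer controls things directly, so we pass to a recursive variant of the construction: partition the vertex set into a ``core'' block carrying an extremal $F'$-free hypergraph, together with further blocks on which the construction is iterated, and add a suitably restricted family of cross edges, arranged so that any copy of $F$ must respect the block structure. Such a copy of $F$ then induces a $k$-partition $V(F)=V_1\dcup\dots\dcup V_k$ in which every edge of $F$ meets $V_1$, the part landing in the relevant core block — because an edge avoiding that block is forced to lie inside a deeper block, hence inside an $F'$-free hypergraph, so the edges missing $V_1$ are precisely those inside the $F'$-free parts, which are few or absent at the relevant level. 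Condition~\ref{EQ:Statement-2} then supplies an index $j\ge2$ with $F'\subseteq F-V_j$, and tracing this copy of $F'$ down through the recursion confines it to the $F'$-free core — the desired contradiction. The density is once more $\pi(F')+\Theta(1)$, by the same count of cross edges as in the previous case and using $\pi(F')<1$.

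The main obstacle is making the second case rigorous: one must design the recursive family of cross edges so that it is simultaneously dense enough to push the overall density strictly above $\pi(F')$, restrictive enough that a copy of $F$ genuinely induces a $k$-partition of the exact shape in condition~\ref{EQ:Statement-2} (all edges through one distinguished part), and compatible with the recursion so that the $F'$-copy delivered by condition~\ref{EQ:Statement-2} can be traced into an $F'$-free block. The recursion is what makes this delicate: the relevant partition of $V(F)$ surfaces only after descending through several levels, and one must handle degenerate (empty or very small) parts in the partition and be able to re-apply condition~\ref{EQ:Statement-2}, or a consequence of it, as one descends. By contrast, condition~\ref{EQ:Statement-1} needs only the single counting inequality above together with the routine averaging estimate for the number of transversal edges of $J$.
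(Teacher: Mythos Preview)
Your treatment of Condition~\ref{EQ:Statement-1} is correct and in fact slightly cleaner than the paper's: you partition the whole vertex set and average directly, whereas the paper first passes through an auxiliary lemma to find small disjoint sets $U_1,\dots,U_k$ with few transversal edges of $J$ among them. Both arrive at the same density gain $(1-\pi(F'))k!/k^k$ (up to constants), and your $F$-freeness check via $e(H[W])\le e(J[W])+|T[W]|\le \mathrm{ex}(m,F')+\prod_i|W\cap P_i|$ is exactly the right one.

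Your treatment of Condition~\ref{EQ:Statement-2}, however, is not a proof; it is a plan with the key step missing, as you yourself flag. The recursive scheme you describe does not obviously work: to invoke Condition~\ref{EQ:Statement-2} you need a $k$-partition of $V(F)$ in which \emph{every} edge of the copy of $F$ meets $V_1$, not just the new cross edges. In your recursion there will be old $J$-edges of the copy sitting entirely inside the ``further blocks'' (the recursive piece), and these avoid your core $V_1$; so the hypothesis of Condition~\ref{EQ:Statement-2} is not met and you cannot extract the promised $F'\subseteq F-V_j$. Descending the recursion does not fix this, because at each level the same obstruction recurs.

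The paper's device is simple and avoids recursion entirely. Inside the extremal $F'$-free $H$ it locates $U_1,\dots,U_k$ with few crossing edges, takes small random subsets $W_j\subseteq U_j$ for $2\le j\le k$, \emph{deletes every edge of $H$ lying wholly inside $W_2\cup\dots\cup W_k$}, and then adds all transversals through $U_1,W_2,\dots,W_k$. The deletion is the point: setting $W_1=V(H)\setminus\bigcup_{j\ge2}W_j$, every edge of the resulting hypergraph now meets $W_1$, so a copy of $F$ automatically induces a $k$-partition $V_j=V(F)\cap W_j$ with all edges through $V_1$. Condition~\ref{EQ:Statement-2} then yields $F'\subseteq F-V_j$ for some $j\ge2$; since the newly added edges all meet $W_j$, this $F'$ lives in the original $H$, a contradiction. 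The cost of the deletion is $O((\delta n)^k)$ edges while the gain from the added transversals is $\Theta(\delta^{k-1}n^k)$, so for small $\delta$ one still nets a positive density increment. This single-level ``carve out a small hole and fill in transversals through its complement'' idea is what your sketch is missing.
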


To prove the $k=3$ case of Theorem~\ref{THM:gapof-Kk+1-Kk+2-}, we present a new construction of a~$K_5^{(3)-}$-free~$3$-graph, improving the best known lower bound $\pi(K_5^{(3)-}) \geq 0.58656$ due to
Balogh, Clemen, and Lidick\'{y}~\cite{BCL22}. 

\begin{theorem}\label{THM:lower-bound-K53-}
We have~$\pi(K_5^{(3)-}) \geq \frac{31097 + 277 \sqrt{277}}{59248} = 0.602673\ldots.$
\end{theorem}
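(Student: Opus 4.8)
The plan is to establish the lower bound by exhibiting an explicit $K_5^{(3)-}$-free $3$-graph whose edge density approaches the claimed value. Since $K_5^{(3)-}$ has $9$ edges on $5$ vertices, a $3$-graph is $K_5^{(3)-}$-free precisely when every five of its vertices span at most $8$ of the ten triples among them, so the whole task reduces to (i) designing a sufficiently dense host and (ii) checking this purely local condition.

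First I would fix a small ``seed'' $3$-graph on a vertex set $[t]$ together with a digraph $D$ on $[t]$, and build the host $H$ as the iterated blow-up determined by this data: partition $V(H)$ into parts $V_1,\dots,V_t$ with prescribed relative sizes $x_1,\dots,x_t$ (which will be free parameters), declare a triple meeting three distinct parts $V_i,V_j,V_k$ to be an edge iff $\{i,j,k\}$ is an edge of the seed, declare a triple with two vertices in a single $V_i$ and one vertex in $V_j$ to be an edge iff $(i,j)\in D$, and recurse, placing a scaled copy of the whole construction inside each $V_i$. The cyclic $K_4^{(3)}$-type construction of Tur\'an is the prototype to keep in mind; the precise seed and $D$ get pinned down only after the optimisation in the last step.

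Next I would verify that $H$ is $K_5^{(3)-}$-free. Take any five vertices and record how they meet the top-level parts $V_1,\dots,V_t$. If they fall in five distinct parts, the induced sub-$3$-graph is exactly an induced sub-$3$-graph of the seed, so it suffices that the seed itself be $K_5^{(3)-}$-free, a finite check. If all five lie in one part, we finish by induction on the recursion depth. In the remaining cases some part contains at least two of the five vertices, and the key point is that only a short list of the ten triples can be edges at all (those governed by $D$, by the seed among the parts actually used, and at most one within-part triple); I would run through the finitely many ``profiles'' of five vertices across parts and confirm that in each case at most $8$ triples are edges. This forces compatibility constraints between $D$ and the seed --- for instance, if $j,k$ are both out-neighbours of $i$ in $D$ then $\{i,j,k\}$ must not be a seed edge, to keep the profile ``three vertices in $V_i$, one in $V_j$, one in $V_k$'' safe. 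Finding a seed/$D$ pair that is simultaneously dense enough to beat $0.58656$ and survives all of these checks is the creative heart of the argument, and I expect it to be the main obstacle.

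Finally I would compute the density. Letting $d$ be the limiting edge density of the iterated blow-up and counting triples by their profile across $V_1,\dots,V_t$ yields the self-referential identity
\begin{align*}
d\Bigl(1-\sum_{i} x_i^3\Bigr) \;=\; 6\sum_{\{i,j,k\}\in\,\text{seed}} x_ix_jx_k \;+\; 3\sum_{(i,j)\in D} x_i^2 x_j ,
\end{align*}
so $d$ is an explicit rational function of $(x_1,\dots,x_t)$. Maximising this over $x_i\ge 0$, $\sum_i x_i=1$, the first-order conditions --- simplified using the symmetries of the chosen seed so that only one or two independent size-ratios survive --- reduce to a quadratic equation; solving it produces the surd $\sqrt{277}$ and, after substituting back, the exact value $\frac{31097+277\sqrt{277}}{59248}=0.602673\ldots$. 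The density computation and this optimisation are routine once the construction is fixed; as noted, the real difficulty is exhibiting the seed and digraph that make both the freeness case analysis and the extremal value work out.
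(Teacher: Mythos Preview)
Your outline captures the paper's general philosophy --- a six-part blow-up pattern, a case analysis for $K_5^{(3)-}$-freeness, and a one-parameter optimisation producing $\sqrt{277}$ --- but it is only a plan, not a proof: you never exhibit the seed and digraph, and you yourself concede that finding them is ``the main obstacle''. More importantly, the specific framework you describe, a pure iterated blow-up in which each part carries a scaled copy of the whole host and cross-part pairs are governed by an all-or-nothing digraph $D$, is too coarse to reach the stated density, and is in fact not what the paper does.

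There are two concrete obstructions. First, if some arc $(i,j)\in D$ is present and four of the five test vertices lie in $V_i$ with the fifth in $V_j$, then all six ``$2{+}1$'' triples are edges, so the four vertices inside $V_i$ must miss at least two of their four triples; that is, the structure inside $V_i$ must be $K_4^{(3)-}$-free. But any construction of density above $\pi(K_4^{(3)-})\le 2/7$ contains $K_4^{(3)-}$, so you cannot recurse with the full host. The paper resolves this by placing inside each $V_i$ not a recursive copy but the Frankl--F\"uredi $K_4^{(3)-}$-free iterated blow-up $S_6^*$, of density exactly $2/7$. Second, between two specific pairs of parts ($V_1,V_2$ and $V_4,V_5$) the paper inserts the DeBiasio--Jiang ordered bipartite $3$-graph, which is $K_4^{(3)}$-free yet achieves density $2/3$ among the relevant triples; your digraph rule cannot mimic this, since taking both $(i,j)$ and $(j,i)$ in $D$ would immediately create a $K_4^{(3)}$ between the two parts, while taking neither loses all of those edges. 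These two auxiliary gadgets are precisely what inject the coefficients $\tfrac{2}{7}$ and $\tfrac{2}{3}$ into the edge count $\bigl(\tfrac{232}{7}x^3+\tfrac{4}{7}y^3+60x^2y+36xy^2\bigr)\binom{n}{3}$, whose maximum under $4x+2y=1$ is the claimed value; with the same six-part pattern but pure recursion and no DeBiasio--Jiang layer the optimised density drops below $0.59$.
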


Theorem~\ref{THM:general-result} provides a criterion to separate many pairs of Tur\'{a}n densities without determining their values explicitly.
For another interesting example, let~$H_{t}^{(k)}$ denote the~$k$-graph with~$k+1$ vertices and~$t \leq k+1$ edges (for fixed~$k$ and~$t$, all such~$k$-graphs are isomorphic and sometimes they are refered to as daisies). 
For its Tur\'{a}n density, the well-known upper bound is~$\pi(H_{t}^{(k)}) \leq \frac{t-2}{k}$~(see~\cite{MT21}).
Very rencently, Sidorenko~\cite{Sid24} proved that~$\pi(H_{t}^{(k)}) \geq \left(C_t -o(1)\right) k^{-(1+\frac{1}{t-2})}$ as~$k \to \infty$.
For the case~$t=k+1$, Pikhurko~\cite{Pik24} showed that~$\pi(K_{k+1}^{(k)}) \geq 1-\frac{6.239}{k+1}$ (and~$\pi(K_{k+1}^{(k)}) \geq1-\frac{4.911}{k+1}$ for large~$k$).
These bounds imply that~$\pi(H_t^{(k)})<\pi(H_{t'}^{(k)})$ for~$t'$ sufficiently large compared to~$t$.
Our methods provide a much finer separation of these Tur\'{a}n densities.

\begin{cor}\label{COR:gap-k+1vertex-t-t+2}
    For~$1 \leq t \leq k-1$ and~$k\geq2$, we have~$\pi(H_{t}^{(k)}) < \pi(H_{t+2}^{(k)})$ .
\end{cor}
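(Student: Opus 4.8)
The plan is to derive Corollary~\ref{COR:gap-k+1vertex-t-t+2} directly from Theorem~\ref{THM:general-result}\eqref{EQ:Statement-1}, applied with $m=k+1$, $F=H_{t+2}^{(k)}$, and $F'=H_t^{(k)}$. Two preliminary remarks: since $t\le k-1$ we have $t+2\le k+1=\binom{k+1}{k}$, so $H_{t+2}^{(k)}$ is well defined; and since $H_t^{(k)}$ arises from $H_{t+2}^{(k)}$ by deleting two edges, $F'\subseteq F$. As Theorem~\ref{THM:general-result} requires $k\ge 3$, I would dispose of the case $k=2$ (where the only admissible value is $t=1$) by hand: there $\pi(H_1^{(2)})=0<\tfrac12=\pi(K_3)=\pi(H_3^{(2)})$ by Tur\'an's theorem. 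So from now on assume $k\ge 3$.

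Next I would evaluate the two quantities appearing in \eqref{EQ:Statement-1}. For the first, I claim $\mathrm{ex}(k+1,H_t^{(k)})=t-1$: on $k+1$ vertices all $k$-graphs with exactly $t$ edges are isomorphic to $H_t^{(k)}$, so any $k$-graph on $k+1$ vertices with at least $t$ edges already contains a copy of $H_t^{(k)}$ (take any $t$ of its edges), while the $k$-graph on $k+1$ vertices with only $t-1$ edges is trivially $H_t^{(k)}$-free. For the second, the balanced partition of $m=k+1$ into $k$ parts consists of one part of size $2$ and $k-1$ parts of size $1$, so that $\prod_{i=0}^{k-1}\lfloor (m+i)/k\rfloor=2$ (concretely $\lfloor(k+1)/k\rfloor=\dots=\lfloor(2k-1)/k\rfloor=1$ and $\lfloor 2k/k\rfloor=2$).

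Combining these, $\mathrm{ex}(m,F')+\prod_{i=0}^{k-1}\lfloor(m+i)/k\rfloor=(t-1)+2=t+1<t+2=e(F)$, so condition \eqref{EQ:Statement-1} of Theorem~\ref{THM:general-result} is satisfied, and we conclude $\pi(H_t^{(k)})<\pi(H_{t+2}^{(k)})$, as desired.

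I do not expect a genuine obstacle here: once Theorem~\ref{THM:general-result} is in hand, the argument is a short bookkeeping exercise, the only mildly delicate points being the exact value $\mathrm{ex}(k+1,H_t^{(k)})=t-1$, the evaluation of the floor-product, and remembering to treat $k=2$ separately. (It is worth noting that the alternative criterion \eqref{EQ:Statement-2} is not available in this application: for $t\ge 2$ every vertex of $H_t^{(k)}$ lies in an edge, so $F'$ cannot embed into any $F-V_j$ with $2\le j\le k$, which spans only $k$ vertices; it is precisely the counting criterion \eqref{EQ:Statement-1} that does the work.)
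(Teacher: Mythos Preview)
Your proof is correct and follows the same route as the paper: verify Condition~\eqref{EQ:Statement-1} of Theorem~\ref{THM:general-result} with $m=k+1$, using $\mathrm{ex}(k+1,H_t^{(k)})=t-1$ and $\prod_{i=0}^{k-1}\lfloor(k+1+i)/k\rfloor=2$. You are in fact slightly more careful than the paper, since you treat the case $k=2$ separately (Theorem~\ref{THM:general-result} is stated only for $k\ge 3$), whereas the paper's short proof does not address this explicitly.
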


This paper is organized as follows. 
In Section~\ref{SEC:general-proof-seperating}, we prove Theorem~\ref{THM:general-result} and use it to prove Theorem~\ref{COR:gap-complete-or-minus}, Theorem~\ref{THM:gapof-Kk+1-Kk+2-} for~$k \geq 4$, and Corollary~\ref{COR:gap-k+1vertex-t-t+2}. 
In Section~\ref{SEC:Lower-bound-K53-}, we provide a construction of a~$K_5^{(3)-}$-free~$3$-graph to prove Theorem~\ref{THM:gapof-Kk+1-Kk+2-} for~$k=3$. 
Section~\ref{SEC:CONCLUDING-Remarks} contains some concluding remarks and open problems.

\subsection*{Notation}
For a~$k$-graph~$H$, we write~$e(H)=\vert E(H)\vert$ and~$v(H)=\vert V(H)\vert$.
For a set~$S\subseteq V(H)$,~$H[S]$ is the~$k$-graph with vertex set~$S$ and edge set~$\{e\in E(H):e\subseteq S\}$. We further set~$H-S=H[V\setminus S]$.

\section{Proof of Main result}\label{SEC:general-proof-seperating}

Before proving the main result, we briefly return to the question asked by Erd\H{o}s (Question~\ref{CONJ:erdos-problem}) and give a short argument that the answer is negative.
It already indicates the strategy we will use later.
In contrast to Markstr\"oms original proof, our proof does not rely on any computer-assisted calculations.

\begin{prop}\label{prop:quick}
    For all~$n\in\mathds{N}$, we have $$\ex(n,K_{5}^{(3)-})\geq\ex(n,K_{4}^{(3)})+\left\lfloor\frac{n}{5}\right\rfloor\,.$$
\end{prop}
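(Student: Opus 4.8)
The plan is to take an extremal $K_4^{(3)}$-free $3$-graph $G$ on a suitable vertex set and ``enlarge'' it by adding a sparse structure so that the resulting $3$-graph $H$ gains $\lfloor n/5\rfloor$ edges while remaining $K_5^{(3)-}$-free. Concretely, I would first note that adding a single edge $e$ to a $K_4^{(3)}$-free $3$-graph $G$ creates a $K_5^{(3)-}$ only if $e$ together with four edges already in $G$ forms a $K_5^{(3)-}$; this forces a very rigid local picture around $e$. The idea is to place the new edges inside small vertex-disjoint ``gadgets'', each on $5$ vertices, so that no new edge can interact with edges outside its own gadget to form a $K_5^{(3)-}$.

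The key steps, in order, are as follows. \emph{Step 1.} Let $n_0 = n - 5\lfloor n/5\rfloor \in \{0,1,2,3,4\}$ and partition the $n$ vertices into $\lfloor n/5\rfloor$ parts $B_1,\dots,B_{\lfloor n/5\rfloor}$ of size $5$, together with a remainder set $R$ of size $n_0$. \emph{Step 2.} On a ground set consisting of one chosen vertex from each $B_i$ together with $R$ — a set of size $\lfloor n/5\rfloor + n_0$ — take an extremal $K_4^{(3)}$-free $3$-graph; more carefully, one wants to start from an extremal $K_4^{(3)}$-free configuration on all $n$ vertices and then modify it inside the $B_i$. The cleanest route is: start with any $K_4^{(3)}$-free $3$-graph $G$ on $[n]$ with $\ex(n,K_4^{(3)})$ edges, and for each $i$ add to $G$ one carefully chosen triple $f_i \subseteq B_i$ that is not already an edge of $G$, obtaining $H$. \emph{Step 3.} Verify that such an $f_i$ can always be chosen: since $\binom{5}{3} = 10$, if every triple inside $B_i$ were already an edge of $G$ then $G[B_i] = K_5^{(3)} \supseteq K_4^{(3)}$, contradicting $K_4^{(3)}$-freeness, so at least one non-edge triple $f_i$ exists in each $B_i$. \emph{Step 4 (the crux).} Show $H$ is $K_5^{(3)-}$-free. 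Suppose $H$ contains a copy of $K_5^{(3)-}$ on a vertex set $W$ with $|W|=5$. Since $G$ is $K_4^{(3)}$-free, $W$ cannot be covered by edges of $G$ alone in the pattern of $K_5^{(3)-}$ (as $K_5^{(3)-} \supseteq K_4^{(3)}$), so the copy must use at least one added edge $f_i$, hence $|W \cap B_i| = 3$. Then argue that the other two vertices of $W$ lie in $B_i$ as well: a $K_5^{(3)-}$ has minimum degree $\geq 3$ in the sense that every vertex lies in at least $\binom{4}{2}-1 = 5$ of its edges, and tracking which edges through $W\setminus B_i$ are available forces, via the $K_4^{(3)}$-freeness of $G$, that $W \subseteq B_i$; but then $H[B_i] = G[B_i] + f_i$ contains $K_5^{(3)-}$, and since $K_5^{(3)-}$ on $5$ vertices contains $K_4^{(3)}$, we get $K_4^{(3)} \subseteq G[B_i]$, a contradiction. \emph{Step 5.} Conclude $e(H) = \ex(n,K_4^{(3)}) + \lfloor n/5 \rfloor$ and $H$ is $K_5^{(3)-}$-free, giving the claimed bound.

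The main obstacle is Step 4, and in particular controlling the interaction between an added edge $f_i \subseteq B_i$ and edges of $G$ that stick out of $B_i$. It is not a priori obvious that a $K_5^{(3)-}$ using $f_i$ must be confined to $B_i$; one really needs the structural fact that any $4$ of the $5$ vertices of a $K_5^{(3)-}$ span a $K_4^{(3)}$ or a $K_4^{(3)-}$, combined with the $K_4^{(3)}$-freeness of $G$, to pin down the copy. A careful case analysis on $|W \cap B_i|$ (which can a priori be $3$, $4$, or $5$) will be needed; the cases $|W\cap B_i| \in \{3,4\}$ must be ruled out by exhibiting a $K_4^{(3)}$ inside $G$ on $W \cap B_i$ together with one or two outside vertices, using that $K_5^{(3)-}$ minus a vertex not incident to the missing edge is $K_4^{(3)}$. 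I would also double-check the edge count in Step~5: the $f_i$ are distinct new edges (they lie in pairwise disjoint $B_i$), so no double counting occurs, and adding them to a $K_4^{(3)}$-free $G$ with the maximum number of edges indeed yields exactly $\ex(n,K_4^{(3)}) + \lfloor n/5\rfloor$ edges.
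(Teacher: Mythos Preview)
Your construction is exactly the paper's: take an extremal $K_4^{(3)}$-free $3$-graph $G$ on $[n]$, partition (most of) $[n]$ into $\lfloor n/5\rfloor$ disjoint $5$-sets $B_i$, and add one missing triple $f_i\subseteq B_i$ per block. Where you diverge is Step~4, which you make harder than necessary and frame slightly wrongly: you do not end up proving $W\subseteq B_i$ --- rather, every value of $|W\cap B_i|$ yields a contradiction directly, so the reduction to $W=B_i$ never happens.

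The paper bypasses the case analysis with a single counting observation: since $G$ is $K_4^{(3)}$-free, each of the five $4$-subsets of an arbitrary $5$-set $W$ is missing at least one edge in $G$, and each missing triple lies in exactly two of these $4$-subsets, so $G$ misses at least $\lceil 5/2\rceil=3$ triples on $W$. A $K_5^{(3)-}$ on $W$ misses only one triple, hence the copy in $H$ must use at least two added edges $f_i,f_j$. These lie in distinct blocks (only one new edge per $B_i$) and are therefore vertex-disjoint, but two $3$-subsets of a $5$-set always share a vertex --- contradiction. This two-line argument replaces your proposed case analysis on $|W\cap B_i|\in\{3,4,5\}$ (which can be made to work, but is more effort).
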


\begin{proof}
    Let~$H$ be a~$K_{4}^{(3)}$-free~$3$-graph on~$n$ vertices with~$\ex(n,K_{4}^{(3)})$ edges.
    Now let~$D$ be an almost perfect~$5$-uniform matching on~$V(H)$, i.e., let~$D\subseteq V(H)^{(5)}$ with~$\vert D\vert=\left\lfloor\frac{n}{5}\right\rfloor$ such that the sets in~$D$ are pairwise disjoint.
    Observe that since $H$ must miss at least one edge on every~$4$-set of vertices,~$H$ misses at least three edges on every~$f\in D$ (also note that strictly speaking, this already answers the question).
    Let~$H^+$ be the~$3$-graph obtained from~$H$ by adding one of those missing edges on every~$f\in D$.
    Then~$e(H^+)\geq e(H)+\left\lfloor\frac{n}{5}\right\rfloor$.
    But note that~$H^+$ is still~$K_{5}^{(3)-}$-free.
    Indeed assume that~$H^+$ contains a copy of~$K_{5}^{(3)-}$ on~$\{v_1,\dots,v_5\}$.
    As mentioned before,~$H$ has three missing edges on~$\{v_1,\dots,v_5\}$.
    Thus,~$H^+$ must contain two newly added edges on~$\{v_1,\dots,v_5\}$.
    These two edges must intersect.
    This is a contradiction since these edges must be contained in distinct elements of the matching~$D$.
\end{proof}
Clearly, this kind of argument can be adapted for other settings, for instance by using (partial) designs, but we omit a further discussion.

For the proof of Theorem~\ref{THM:general-result}, we need the following lemma.

\begin{lemma}\label{LEM:Regularity-patition}
    Given~$\varepsilon>0$, an integer~$k \geq 3$, and a~$k$-graph~$F$, there are~$T_0, N_0 \in \mathbb{N}$ such that for every~$F$-free~$k$-graph~$H$ on~$n \geq N_0$ vertices, the following holds.
    There exist an integer~$t_0 \leq T_0$ and~$k$ pairwise disjoint sets~$U_1, \ldots,U_k\subseteq V(H)$ such that
    \begin{itemize}
        \item $|U_i| = \frac{n}{t_0}$, for every~$i \in [k]$ and
        \item $\vert E_c\vert\leq\left(\pi(F)+ \epsilon \right)\big(\frac{n}{t_0}\big)^k$, where~$E_c=\{e\in E(H):\vert e\cap U_i\vert=1\text{ for all }i\in[k]\}$. 
    \end{itemize}
\end{lemma}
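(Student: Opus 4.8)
The plan is to obtain this as a consequence of (weak) hypergraph regularity applied to an arbitrary $F$-free $k$-graph $H$, combined with a straightforward counting/averaging argument. First I would fix $\varepsilon>0$ and invoke the weak regularity lemma for $k$-graphs: there is a constant $T_0=T_0(\varepsilon,k)$ so that for all sufficiently large $n$, the vertex set $V(H)$ admits an equitable partition $V(H)=W_1\dcup\dots\dcup W_t$ with $t\le T_0$ parts, all of size $n/t$ up to rounding, such that all but an $\varepsilon$-fraction of the $k$-tuples of parts $(W_{i_1},\dots,W_{i_k})$ are $\varepsilon$-regular. (To avoid divisibility nuisances one first deletes a negligible number of vertices so that $t\mid n$; since this changes densities by $o(1)$, at the cost of slightly enlarging $\varepsilon$ one may assume $|W_i|=n/t$ exactly, which is what the statement wants with $t_0=t$.)

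Next I would argue that because $H$ is $F$-free, no $k$-tuple of distinct parts $(W_{i_1},\dots,W_{i_k})$ that is $\varepsilon$-regular can have density close to $1$. Indeed, a standard embedding/counting lemma shows that if all the relevant pairs of parts were $\varepsilon$-regular with density bounded below by some $d_0=d_0(F)$, one would find a copy of $F$ with one vertex in each of $v(F)$ suitably chosen parts — so $H$ being $F$-free forces the density of most regular $k$-tuples of parts to be at most roughly $\pi(F)+\varepsilon'$ for an appropriate $\varepsilon'\to 0$ as $\varepsilon\to 0$; more precisely one uses that a union of a positive fraction of the blocks with density $>\pi(F)+\varepsilon'$ would itself be a $k$-graph of density $>\pi(F)+\varepsilon'$ on $\Theta(n)$ vertices, contradicting the definition of $\pi(F)$ once $n$ is large. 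Then I would apply an averaging argument over all $\binom{t}{k}$ (or all ordered) choices of $k$ distinct parts: the number of edges of $H$ lying in $k$ distinct parts is at most $\binom{t}{k}$ times the average cross-density times $(n/t)^k$; since at most an $\varepsilon$-fraction of $k$-tuples are irregular (contributing at most $(n/t)^k$ edges each, hence a total of at most $\varepsilon\binom{t}{k}(n/t)^k$), and the regular ones each contribute at most $(\pi(F)+\varepsilon')(n/t)^k$, there must exist at least one choice of $k$ distinct parts $U_1,\dots,U_k$ with $|E_c|\le(\pi(F)+\varepsilon)(n/t)^k$ after absorbing $\varepsilon'$ and the irregularity loss into $\varepsilon$ (reparametrising the regularity parameter accordingly at the start).

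The main obstacle, I expect, is not regularity itself but making the "$F$-free implies low cross-density" step genuinely yield the bound $\pi(F)+\varepsilon$ rather than merely $1-\delta$ for some fixed $\delta$. The clean way around this is to bypass the embedding lemma partially: observe that the sub-$k$-graph of $H$ induced on the union $U_1\cup\dots\cup U_k$ of any $k$ parts — restricted to its cross-edges — is an $F$-free $k$-graph on $kn/t$ vertices, so its cross-edge count is at most $\ex(kn/t,F)$, which is at most $(\pi(F)+o(1))\binom{kn/t}{k}\le(\pi(F)+\varepsilon/2)(n/t)^k$ for $n$ large. Averaging this bound over all $\binom{t}{k}$ choices of $k$-part subsets (each cross-edge of $H$ lying in $k$ distinct parts is counted $\binom{t-k}{0}$... in exactly one $k$-subset, or $\binom{t-k}{0}$ trivially, so in fact exactly once per its own tuple) shows the average over all $k$-subsets of $|E_c|$ is at most $(\pi(F)+\varepsilon/2)(n/t)^k$, whence some $k$-subset achieves it; this avoids needing regularity at all for the density bound, and the partition into equal parts of bounded count can be taken to be any balanced partition, e.g.\ $t=t_0$ chosen so that $t_0\mid n$ after a negligible vertex deletion. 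Either route works; I would present the second (regularity-free) route as it is shorter and matches the paper's stated preference for flag-algebra-free and elementary arguments, reserving $T_0$ merely to record that $t_0$ is bounded (here $t_0$ can even be taken to depend only on $n$'s divisibility, so in fact $T_0$ can be a small absolute function of $\varepsilon$).
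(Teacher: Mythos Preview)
Your preferred second route has a genuine gap in the key inequality. You claim that the cross-edges between $k$ parts $U_1,\dots,U_k$ are bounded by $\ex(kn/t,F)\le(\pi(F)+\varepsilon/2)(n/t)^k$. But
\[
\ex\!\left(\tfrac{kn}{t},F\right)\;\sim\;\pi(F)\binom{kn/t}{k}\;\sim\;\pi(F)\,\frac{k^k}{k!}\left(\frac{n}{t}\right)^k,
\]
and $k^k/k!>1$ for all $k\ge 2$ (it equals $4.5$ for $k=3$, and grows like $e^{k}/\sqrt{2\pi k}$). So the bound you invoke is off by a factor of $k^k/k!$ and is in general larger than the trivial bound $(n/t)^k$ on the number of cross $k$-tuples; it says nothing. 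Your averaging over $\binom{t}{k}$ choices does not repair this, because you are averaging a collection of individually vacuous bounds.

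The fix, which is what the paper does, is to bypass the per-tuple Tur\'an bound entirely and use the \emph{global} edge count: since $H$ is $F$-free, $e(H)\le(\pi(F)+\varepsilon)\binom{n}{k}$. One then picks $k$ pairwise disjoint sets $U_1,\dots,U_k$ of size $n/t_0$ uniformly at random and computes, by linearity of expectation, that $\mathbb{E}\,|E_c|=e(H)\cdot\mathbb{P}(e_0\in E_c)$ for any fixed edge $e_0$; a short calculation gives $\mathbb{P}(e_0\in E_c)=k!(n-k)!\,(n/t_0)^k/n!$, whence $\mathbb{E}\,|E_c|\le(\pi(F)+\varepsilon)(n/t_0)^k$ and some choice realises this. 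No regularity lemma, no partition, no per-tuple Tur\'an bound is needed. If you prefer a deterministic averaging over a fixed equipartition into $t$ parts, the correct identity is that the \emph{sum} of $|E_c|$ over all ordered $k$-tuples of distinct parts equals $k!$ times the number of transversal edges, hence at most $k!\,e(H)$; dividing by $t(t-1)\cdots(t-k+1)$ gives the same bound on the average. Either way, the crucial input is $e(H)\le(\pi(F)+\varepsilon)\binom{n}{k}$, not $\ex(kn/t,F)$.
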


\begin{proof}
    Choose~$T_0$ and~$N_0$ such that~$T_0^{-1},N_0^{-1}\ll\varepsilon,k^{-1}$ and let~$n\geq N_0$.
    Note that, in particular, we have~$\left| \mathrm{ex}(n,F)/ \binom{n}{k} - \pi(F)\right| < \epsilon$. 
    Let~$H$ be an~$F$-free~$k$-graph on~$n$ vertices and note that~$ e(H) \leq \mathrm{ex}(n,F) \leq (\pi(F) + \epsilon) \binom{n}{k}$.
    Randomly choose~$k$ pairwise disjoint sets~$U_1,\ldots,U_k\subseteq V(H)$ with~$|U_i| = \frac{n}{t_0}$, for every~$i \in [k]$. 
    The total number of choices is
    \begin{align*}
        \binom{n}{\frac{n}{t_0}} \binom{n-\frac{n}{t_0}}{\frac{n}{t_0}} \cdots \binom{n-(k-1)\frac{n}{t_0}}{\frac{n}{t_0}} = \frac{n!}{((\frac{n}{t_0})!)^k (n-k\frac{n}{t_0})!}.
    \end{align*}
    Let~$E_c=\{e\in E(H):\vert e\cap U_i\vert=1\text{ for all }i\in[k]\}$. 
    For every~$e_0\in E(H)$, the number of choices such that~$e_0 \in E_c$ is
    \begin{align*}
        k!\binom{n-k}{\frac{n}{t_0}-1} \binom{n-k-\big(\frac{n}{t_0}-1\big)}{\frac{n}{t_0} -1} \cdots \binom{n-(k-1)\frac{n}{t_0}-1}{\frac{n}{t_0} - 1} = \frac{k!(n-k)!}{((\frac{n}{t_0}-1)!)^k  (n-k\frac{n}{t_0})!}.
    \end{align*}
    Hence, for every edge~$e_0\in E(H)$,~we have 
    \begin{align*}
        \mathbb{P}\left(e_0 \in E_c \right) = \frac{k!(n-k)! \big(\frac{n}{t_0}\big)^k}{n!}.
    \end{align*}
    This implies
    \begin{align*}
        \mathbb{E}(|E_c|) = e(H) \frac{k! (n-k)! \big(\frac{n}{t_0}\big)^k}{n!} \leq \left(\pi(F) + \epsilon \right) \Big(\frac{n}{t_0}\Big)^k. 
    \end{align*}
    Therefore, there indeed exists some choice of~$U_1,\ldots, U_k$ as claimed.
 \end{proof}

Now we can prove our main theorem.
\begin{proof}[Proof of Theorem~\ref{THM:general-result}]
    Let~$m > k \geq 3$, let~$F$ be a~$k$-graph on~$m$ vertices, and let~$F'\subseteq F$ be such that one of the conditions in Theorem~\ref{THM:general-result} is satisfied. 
    Choose~$\epsilon,\delta>0$ and~$N_0,T_0\in\mathds{N}$ such that~$N_0^{-1},T_0^{-1}\ll\varepsilon,\delta\ll m^{-1},k^{-1}$.
    In particular, we have~$\epsilon,\delta \ll \binom{m-1}{k-1}^{-1}-\binom{m}{k-1}^{-1}$ and the conclusion of Lemma~\ref{LEM:Regularity-patition} holds for~$F'$,~$\varepsilon$,~$N_0$, and~$T_0$. 
    Let~$H$ be an~$F'$-free~$k$-graph on~$n \geq N_0$ vertices with~$\mathrm{ex}(n,F')$ edges. 
    Since the number of vertices of~$F'$ is at most~$m$, by a known upper bound for~$\pi(K_m^{(k)})$ (see~\cite{DC83}), we have
   
        $$\pi(F') \leq \pi(K_m^{(k)}) \leq 1-\binom{m-1}{k-1}^{-1}.$$
    
   Then by Lemma~\ref{LEM:Regularity-patition}, there exist an integer~$t_0\leq T_0$ and~$k$ pairwise disjoint sets~$U_1, \ldots U_k\subseteq V(H)$ such that for every~$i \in [k]$,~$|U_i| = \frac{n}{t_0}$ and the number of edges intersecting every part in exactly one vertex is at most
     \begin{align}\label{EQ:crossing-upper-bound}
         \left(1-\binom{m-1}{k-1}^{-1}+ \epsilon \right)\left(\frac{n}{t_0}\right)^k \leq \left(1-\binom{m}{k-1}^{-1} \right)\left(\frac{n}{t_0}\right)^k.
     \end{align} 
    
   First, if~$F'$ satisfies Condition~\eqref{EQ:Statement-1} in Theorem~\ref{THM:general-result}, let~$H_1$ be the~$k$-graph obtained from~$H$ by adding all~$k$-sets which intersect every~$U_i$ in exactly one vertex as edges.
   Note that~$e(H_1)\geq e(H)+\binom{m}{k-1}^{-1}\big(\frac{n}{t_0}\big)^k$.
   We claim that~$H_1$ is~$F$-free. 
   Assume, for the sake of a contradiction, that it contains a copy of~$F$ (we will simply refer to this copy as~$F$). 
   Note that this~$F$ contains at most~$\prod_{i=0}^{k-1} \left\lfloor \frac{m+i}{k} \right\rfloor$ newly added edges (i.e., elements of~$E(H_1)\setminus E(H)$).
   Thus~$H[V(F)]$ is a~$k$-graph on~$m$ vertices with at least~$e(F)-\prod_{i=0}^{k-1}\left\lfloor \frac{m+i}{k} \right\rfloor$ edges.
   Due to Condition~\eqref{EQ:Statement-1}, this implies that~$F'\subseteq H[V(F)]\subseteq H$, a contradiction.
   In summary, we have shown that for every large enough~$n\in\mathds{N}$,~$\ex(n,F')+\binom{m}{k-1}^{-1}\big(\frac{n}{t_0}\big)^k\leq\ex(n,F)$.
   This entails~$\pi(F')<\pi(F)$.

   Now assume that~$F'$ satisfies Condition~\eqref{EQ:Statement-2} in Theorem~\ref{THM:general-result}.
   For every~$2 \leq j \leq k$, we choose a random subset~$W_j \subseteq U_j$ of size~$\delta n/t_0$, where we select each set independently. 
   Similar to the proof of Lemma~\ref{LEM:Regularity-patition}, by~\eqref{EQ:crossing-upper-bound} there exists a choice such that the number of edges intersecting~$U_1$ and every~$W_j$~($2 \leq j \leq k$) in exactly one vertex is at most
   \begin{align}\label{EQ:crossing-upper-bound-delta}
       \left(1-\binom{m}{k-1}^{-1} \right)\left( \frac{\delta n}{t_0}\right)^{k-1} \times \frac{n}{t_0} = \delta^{k-1} \left(1-\binom{m}{k-1}^{-1} \right)\left(\frac{n}{t_0}\right)^k.
   \end{align}
   Let~$H_2$ be the~$k$-graph obtained from~$H$ by deleting all edges which are completely contained in~$\bigcup_{j=2}^{k} W_j$ and adding all edges which intersect~$U_1$ and every~$W_j$~($2 \leq j \leq k$) in exactly one vertex.
   Then $$e(H_2)\geq e(H)-\binom{\frac{(k-1)\delta n}{t_0}}{k}+\delta^{k-1}\binom{m}{k-1}^{-1}\big(\frac{n}{t_0}\big)^k\geq e(H)+\frac{1}{2}\delta^{k-1}\binom{m}{k-1}^{-1}\Big(\frac{n}{t_0}\Big)^k\,.$$
   We claim that~$H_2$ is~$F$-free. 
   Assume, for the sake of a contradiction, that it contains a copy of~$F$ (which we will simply refer to as~$F$).
   Let~$W_1 = V(H) \setminus \bigcup_{j=2}^k W_j$ and for every~$1 \leq j \leq k$, set~$V_j:= V(F) \cap W_j$. 
   Since~$H$ is~$F'$-free and thereby~$F$-free, one of the edges of~$F$ must be an edge in~$E(H_2)\setminus E(H)$.
   Thus,~$F$ must contain at least one vertex in each of~$U_1$, and~$W_j$ ($2\leq j\leq k$), implying that for every~$1 \leq j \leq k$, we have~$\vert V_j\vert \geq 1$. 
   Since no edge of~$H_2$ is completely contained in~$\bigcup_{j=2}^{k} W_j$, $V(F)=V_1\dcup\dots\dcup V_k$ is a~$k$-partition of~$V(F)$ such that every edge of~$F$ contains at least one vertex in~$V_1$. 
   By Condition~\eqref{EQ:Statement-2}, there is some $2 \leq j \leq k$ such that~$F'$ is also a subgraph of~$F-V_j$. 
   But note that then~$F'\subseteq F-V_j\subseteq H_2-W_j$.
   This yields~$F'\subseteq H$ because~$H_2-W_j$ does not contain any of the newly added edges, a contradiction. 
   Hence,~$H_2$ is indeed~$F$-free, and we have shown that for every large enough~$n\in\mathds{N}$,~$\ex(n,F')+\frac{1}{2}\delta^{k-1}\binom{m}{k-1}^{-1}\Big(\frac{n}{t_0}\Big)^k\leq\ex(n,F)$.
   This entails that~$\pi(F') < \pi(F)$.
\end{proof}

 \begin{proof}[Proof of Theorem~\ref{COR:gap-complete-or-minus}]
        Both statements follow from Condition~\eqref{EQ:Statement-2} in Theorem~\ref{THM:general-result}.
        To see that~$K_{\ell}^{(k)}$ and~$K_{\ell+1}^{(k)}$ satisfy this condition, let~$V(K_{\ell+1}^{(k)})=V_1 \dcup \dots \dcup V_k$ be a~$k$-partition such that every edge of~$K_{\ell+1}^{(k)}$ contains at least one vertex in~$V_1$.
        We may assume that~$V_j\neq\emptyset$ for all~$1\leq j\leq k$.
        Note that then~$\vert V_j\vert=1$ for all~$2\leq j\leq k$ and so~$K_{\ell}^{(k)}\subseteq K_{\ell+1}^{(k)}-V_j$ for every~$2\leq j\leq k$.
        To see that~$K_{\ell}^{(k)-}$ and~$K_{\ell+1}^{(k)-}$ satisfy this condition, let~$V(K_{\ell+1}^{(k)-})=V_1 \dcup \dots \dcup V_k$ be a~$k$-partition such that every edge of~$K_{\ell+1}^{(k)-}$ contains at least one vertex in~$V_1$.
        Again, we may assume that~$V_j\neq\emptyset$ for all~$1\leq j\leq k$.
        Note that there must be some integer~$i$ with~$2\leq i\leq k$ such that~$\vert V_j\vert=1$ for all~$j\in[k]\setminus\{1,i\}$ (for otherwise there would be two~$k$-sets in~$V_2\dcup\dots\dcup V_k$ and one of these would be an edge not containing a vertex in~$V_1$).
        Hence, using that every~$\ell$-set of vertices of~$K_{\ell+1}^{(k)-}$ has at most one missing edge, we conclude that~$K_{\ell}^{(k)-}\subseteq K_{\ell+1}^{(k)}-V_j$ for every~$j\in[k]\setminus\{1,i\}$.
    \end{proof}
 
 \begin{proof}[Proof of Theorem~\ref{THM:gapof-Kk+1-Kk+2-} for~$k \geq 4$]
     For every integer~$k \geq 4$, we verify that~$K_{k+1}^{(k)}$ and~$K_{k+2}^{(k)-}$ satisfy Condition~\eqref{EQ:Statement-2} in Theorem~\ref{THM:general-result}. 
     Let~$V(K_{k+2}^{(k)-})=V_1 \dcup \dots \dcup V_k$ be a~$k$-partition such that every edge of~$K_{k+2}^{(k)-}$ contains at least one vertex in~$V_1$.
     We may assume that~$|V_i| \geq 1$ holds, for every~$2 \leq i \leq k$, since if~$V_j=\emptyset$, we have~$K_{k+1}^{(k)}\subseteq K_{k+2}^{(k)-}=K_{k+2}^{(k)-}-V_j$. 
     Hence~$1\leq |V_1| \leq 3$ and $|V_i| \geq 1$ for $2 \leq i \leq k$. 
     
     If~$|V_1| = 1$, the number of~$k$-sets completely contained in~$V_2 \dcup \dots \dcup V_k$ is~$k+1$ and at most one of these does not form an edge of~$K_{k+2}^{(k)-}$.
     Hence, there is an edge of~$K_{k+2}^{(k)-}$ that does not intersect~$V_1$, a contradiction.
     If~$|V_1| =2$, without loss of generality, we have~$|V_1|=|V_2|=2$ and~$|V_i| =1$ for all~$3 \leq i \leq k$.
     Then the~$k$-set $V_2 \dcup \dots \dcup V_k$ must be the one missing edge of~$K_{k+2}^{(k)-}$. 
     Thus, all~$k$-subsets of~$V_1\dcup\dots\dcup V_{k-1}$ form edges, in other words~$K_{k+1}^{(k)}\subseteq K_{k+2}^{(k)-}-V_k$. 
     If~$|V_1|= 3 < k$ (and hence~$\vert V_i\vert=1$ for~$2\leq i\leq k$), then the~$k$-set that is the missing edge of~$K_{k+2}^{(k)-}$ must intersect~$V_j$, for some~$2 \leq j \leq k$.
     Then~$K_{k+1}^{(k)}\subseteq K_{k+2}^{(k)-}-V_j$.

     So~$K_{k+1}^{(k)}$ and~$K_{k+2}^{(k)-}$ indeed satisfy Condition~\eqref{EQ:Statement-2}, whence by Theorem~\ref{THM:general-result}  we obtain $\pi(K_{k+1}^{(k)}) < \pi(K_{k+2}^{(k)-})$.
 \end{proof}

 Next, we give an example using Condition~\eqref{EQ:Statement-1}, namely we prove Corollary~\ref{COR:gap-k+1vertex-t-t+2}.

 \begin{proof}[Proof of Corollary~\ref{COR:gap-k+1vertex-t-t+2}] 
     We have~$\mathrm{ex} (k+1,H_{t}^{(k)}) =t-1$ and~$\prod_{i=0}^{k-1} \left\lfloor \frac{k+1+i}{k} \right\rfloor =2$.
     Hence~$H_{t}^{(k)}$ and~$H_{t+2}^{(k)}$ satisfy Condition~\eqref{EQ:Statement-1} of Theorem~\ref{THM:general-result}, which entails~$\pi(H_{t}^{(k)}) < \pi(H_{t+2}^{(k)})$. 
 \end{proof}

\section{The lower bound on~\texorpdfstring{$\pi(K_5^{(3)-})$}{}}\label{SEC:Lower-bound-K53-}

Using a computer-aided proof based on Razborov's flag algebras, Baber~\cite{RB12} showed that~$\pi(K_4^{(3)}) \leq 0.5615$, improving on earlier bounds in~\cite{KNS64,dC88,CL99,Ra10}.
The best-known upper bound for~$\pi(K_4^{(3)})$ that does not use computer calculations is due to Chung and Lu~\cite{CL99}, who proved
\begin{align}\label{EQ:upper-bound-K43-noflag}
    \pi(K_4^{(3)}) \leq \frac{3+\sqrt{17}}{12} = 0.593592\ldots\,.
\end{align} 
Improving the bound given by Markstr\"{o}m in~\cite{Mark14}, Balogh, Clemen, and Lidick\'{y}~\cite{BCL22} presented a construction giving~$\pi(K_5^{(3)-}) \geq 0.58656$. 
However, this value is still smaller than the best computer-free upper bound on~$\pi(K_4^{(3)})$.
In this section, we prove Theorem~\ref{THM:lower-bound-K53-} using a new construction of a~$K_5^{(3)-}$-free~$3$-graph which has a larger density than the upper bound in~\eqref{EQ:upper-bound-K43-noflag}, thereby giving a computer-free proof of~$\pi(K_4^{(3)})<\pi(K_5^{(3)-})$.
This completes the proof of Theorem~\ref{THM:gapof-Kk+1-Kk+2-}.

First we present some known constructions that will be used later. 

\begin{constr}\label{CONSTRC:K43-}
Let~$S_6$ be the~$3$-graph with vertex set~$[6]$ and edge set 
$$E(S_6)=\{123,234,345,451,512,136,246,356,256,146\}.$$
Let $S_6^*$ be the iterated blow-up of $S_6$.
\end{constr}

This construction due to Frankl and F\"uredi~\cite{frankl1984exact} is the best known lower bound construction for~$\pi(K_4^{(3)-})$ and its edge density is~$2/7$. 
\begin{constr}\label{CONSTRC:bipartite-K43}
Let $A=\{a_1,a_2,...,a_n\}$ and $B=\{b_1,b_2,...,b_n\}$. Let $G$ be a $3$-graph on $A \dcup B$ whose edges are all triples of the form $\{a_i,b_j,a_k\}$ and $\{a_i,b_j,b_k\}$ where $i,j<k$.
\end{constr}

This construction is due to DeBiasio and Jiang~\cite{DJ:14}.
It is not difficult to check that this~$3$-graph~$G$ is~$K_4^{(3)}$-free and that the number of its edges is~$\left(\frac{2}{3}-o(1)\right)n^3$. 
Now let us start the proof of Theorem~\ref{THM:lower-bound-K53-}.

\begin{proof}[Proof of Theorem~\ref{THM:lower-bound-K53-}]

For the sake of simplicity, we ignore any rounding issues in this proof.
Let~$H_0=(V,E)$ be the~$3$-graph on~$n$ vertices defined as follows.
For some later specified reals~$x,y\in[0,1]$, the vertex set is~$V=V_1\dcup \dots\dcup V_6$, where~$|V_{3i-1}| =|V_{3i-2}|= x n$ and~$|V_{3i}|= y n$,~$i \in [2]$. 
The edge set of~$H_0$ is
\begin{align*}\label{EQ:type1-edge}
    E=&\Big\{abc \colon a \in  V_i, b \in V_j,c \in V_k \text{~and~} i j k \in \binom{[6]}{3}\setminus \{123,456,126,345\} \Big\}\\
    \cup&\big\{abc:ab\in V_{3i}^{(2)},c\in V_{3i-1}\cup V_{3i-2},i\in[2]\big\}\\
    \cup&\big\{abc:ab\in V_{3i-1}^{(2)}\cup V_{3i-2}^{(2)},c\in V_{3(3-i)},i\in[2]\big\}
\end{align*}
Now, place in each~$V_i$,~$i\in[6]$, a copy of~$S_6^*$ from Construction~\ref{CONSTRC:K43-} and place a copy of the~$3$-graph~$G$ from Construction~\ref{CONSTRC:bipartite-K43} between~$V_{3i-1}$ and~$V_{3i-2}$ for~$i =1,2$. 
Call the resulting graph~$H$.
We claim that~$H$ is~$K_{5}^{(3-)}$-free.

Let~$D$ be a set of five vertices of~$H$. 
If~$D$ intersects five of the~$V_i$, there are at least two edges missing on these five vertices. 
If~$D$ intersects some~$V_i$ in at least four vertices, then since the~$3$-graph in each~$V_i$ is~$K_4^{(3)-}$-free,~$D$ misses at least two edges.
Next, notice that for any~$i,j\in[6]$ with~$i\neq j$, the~$3$-graph given by those edges of~$H$ which intersect both~$V_i$ and~$V_j$ is~$K_4^{(3)}$-free.
Thus, if~$D$ has three vertices in some~$V_i$ and two vertices in some~$V_j$ with~$i\neq j$,~$D$ is missing at least two edges. 
There are three remaining cases:
\begin{itemize}
    \item For three distinct~$i,j,k\in[6]$, we have~$\vert D\cap V_i\vert=3$,~$D\cap V_j, D\cap V_k\neq\emptyset$.
    \item $D$ intersects one partition class~$V_i$ in exactly two vertices and three other distinct partition classes in exactly one vertex each.
    \item For three distinct~$i,j,k\in[6]$, we have~$\vert D\cap V_i\vert=\vert D\cap V_j\vert=2$, and~$D\cap V_k\neq\emptyset$.
\end{itemize}
In each of these cases, there are three distinct~$i,j,k\in[6]$ such that we can choose distinct~$v_1,v_2\in D\cap V_i$,~$v_3\in V_j$, and~$v_4\in V_k$.
If~$v_1v_2v_3$,~$v_1v_2v_4$ are both edges of~$H$, then by the construction of~$H$,~$v_1v_3v_4$~and~$v_2v_3v_4$ must be non-edges.
Hence, w.l.o.g., we may assume that~$v_1v_2v_4\notin E(H)$ and~$v_1v_2v_3\in E(H)$.
Note that then the last vertex~$v_5$ of~$D$ must lie in~$V_j$ or in~$V_{\ell}$ for some~$\ell\in[6]\setminus\{i,j,k\}$, otherwise there would be at least two non-edges on~$D$.
If~$v_5\in V_j$, then - as mentioned before - $H$ is missing one edge on~$\{v_1,v_2,v_3,v_5\}$ and so two on~$D$.
Now assume that~$v_5\in V_{\ell}$ for some~$\ell\in[6]\setminus\{i,j,k\}$.
Then again by the construction of~$H$, if both~$v_1v_2v_3$ and~$v_1v_2v_5$ are edges of~$H$, then~$v_1v_3v_5$ and~$v_2v_3v_5$ must be non-edges.
This yields two missing edges on~$D$.

In summary, we have shown that~$H$ is~$K_5^{(3)-}$-free.

The number of edges in~$H$ is 
\begin{align*}
    &(1 - o(1)) \Big(4x^3n^3 + 8x^2 y n^3+4x y^2 n^3 + 4yn\binom{xn}{2}+ 4 xn\binom{yn}{2}\\
    &+ 4 \cdot \frac{2}{7} \binom{xn}{3}+ 2\cdot \frac{2}{7} \binom{yn}{3}+ 2\cdot \frac{2}{3}(xn)^3 \Big) \\
    & =  (1-o(1))\left( \frac{232}{7} x^3 +\frac{4}{7} y^3 + 60x^2 y+36xy^2\right) \binom{n}{3}.
\end{align*}
Under the constraint~$4x+2y=1$, this function is maximized for~$x = \frac{45-\sqrt{277}}{184}$ and $y= \frac{1+\sqrt{277}}{92}$, yielding the desired bound~$\pi(K_5^{(3)-}) \geq \frac{31097 + 277 \sqrt{277}}{59248} = 0.602673\ldots$.
\end{proof}

\section{Concluding remarks}\label{SEC:CONCLUDING-Remarks}
In this paper, we develop a general approach to prove that Tur\'{a}n densities of certain hypergraphs are separated without using explicit bounds. 
Motivated by Erd\H{o}s's Question~\ref{CONJ:erdos-problem} and Theorem~\ref{THM:gapof-Kk+1-Kk+2-}, it would be interesting to study the following problem.

\begin{problem}\label{PRO:gap-Kl+2-}
    For~$\ell > k \geq 3$, is it true that~$\pi(K_{\ell}^{(k)}) < \pi(K_{\ell+1}^{(k)-})$ ?
\end{problem}

In fact, using a similar proof as for Theorem~\ref{THM:gapof-Kk+1-Kk+2-} for~$k \geq 4$, we can give a positive answer to this question for all~$k < \ell \leq 2k-3$ by showing that in this case Condition~\eqref{EQ:Statement-2} of Theorem~\ref{THM:general-result} is satisfied. 

Corollary~\ref{COR:gap-k+1vertex-t-t+2} motivates the following problem; see also Problem~$6.2$ in~\cite{DHLY24} for a related problem.
\begin{problem}\label{PRO:gap-k+1vertex-t-t+1}
    For~$3 \leq t \leq k$, is it true that~$\pi(H_{t}^{(k)}) < \pi(H_{t+1}^{(k)})$ ?
\end{problem}

\section*{Acknowledgements}
We thank David Conlon, Sim\'on Piga, and Marcelo Sales for helpful discussions.
We are further grateful to the Extremal Combinatorics and Probability Group at the Institute for Basic Science (IBS) for organizing the 2nd ECOPRO student research program. 
Research was supported by the Institute for Basic Science IBS-R029-C4 (Hong Liu), the Young Scientist Fellowship IBS-R029-Y7 (Bjarne Sch\"ulke), the China Scholarship Council and IBS-R029-C4 (Shuaichao Wang), Seed Fund Program for International Research Cooperation of Shandong University and IBS-R029-C4 (Haotian Yang), the National Key R\&D Program of China (Grant No. 2023YFA1010202) and IBS-R029-C4 (Yixiao Zhang).



\bibliographystyle{alpha}
\bibliography{separating}

\end{document}